\theoremstyle{plain}
\newtheorem{mythm}{Theorem}[section]
\newtheorem{crol}{Corollary}[section]
\newcommand{\bs}{\backslash}
\newcommand{\N}{\mathbb{N}}
\newcommand{\Z}{\mathbb{Z}}
\newdimen\Squaresize \Squaresize=14pt
\newdimen\Thickness \Thickness=0.4pt
\def\Square#1{\hbox{\vrule width \Thickness
   \vbox to \Squaresize{\hrule height \Thickness\vss
      \hbox to \Squaresize{\hss#1\hss}
   \vss\hrule height\Thickness}
\unskip\vrule width \Thickness} \kern-\Thickness}
\def\Vsquare#1{\vbox{\Square{$#1$}}\kern-\Thickness}
\title[The radical of skew polynomial extensions of PI rings] {On the Jacobson radical of skew polynomial extensions of rings satisfying a polynomial identity}
\author{Blake W. Madill}
\thanks{The author thanks NSERC for its generous support.}
\address{University of Waterloo \\
Department of Pure Mathematics \\
Waterloo, Ontario \\
Canada  N2L 3G1\\}
\email{bmadill@uwaterloo.ca}
\keywords{Differential polynomial ring, PI ring, Polynomial identity, Derivations, Jacobson radical}
\subjclass[2010]{16N20, 16N40, 16R40, 16W25}
\begin{document}

\begin{abstract} Let $R$ be a ring satisfying a polynomial identity and let $D$ be a derivation of $R$. We consider the Jacobson radical of the skew polynomial ring $R[x;D]$ with coefficients in $R$ and with respect to $D$, and show that $J(R[x;D])\cap R$ is a nil $D$-ideal.  This extends a result of Ferrero, Kishimoto, and Motose, who proved this in the case when $R$ is commutative.   \end{abstract}

\keywords{Differential polynomial ring, PI ring, Jacobson radical}

\maketitle

\section{Introduction}

Let $R$ be a ring (not necessarily unital).  We say that a map $D:R\rightarrow R$ is a derivation if it satisfies $D(a+b)=D(a)+D(b)$ and $D(ab)=D(a)b+aD(b)$ for all $a,b\in R$. Given a derivation $D$ of a ring $R$, we can construct the \emph{differential polynomial ring}, also known as a \emph{skew polynomial ring of derivation type,} $R[x;D]$, which as a set is given by
$$
\lbrace x^na_n+\cdots+xa_1+a_0: n\geq 0, a_i\in R\rbrace,
$$
with regular polynomial addition and multiplication given by $xa=ax+D(a)$ for $a\in R$ and then extending via associativity and linearity.

When $D$ is the trivial derivation of $R$ (i.e. $D(a)=0$ for all $a\in R$) then $R[x;D]$ is simply $R[x]$. In this case, Amitsur \cite{Amitsur} showed that the Jacobson radical $J(R[x])$ is the polynomial ring $(J(R[x])\cap R)[x]$ and that $J(R[x])\cap R$ is a nil ideal of $R$. Extending this result, Ferrero, Kishimoto, and Motose \cite{Ferrero} showed that $J(R[x;D])=(J(R[x;D])\cap R)[x;D]$ and if it is further assumed that $R$ is commutative then $J(R[x;D])\cap R$ is a nil ideal of $R$. We extend this result to polynomial identity rings, of which commutative rings are a special case.  Using the work of Ferrero, Kishimoto, and Motose \cite{Ferrero} then allows us to completely describe the Jacobson radical of the skew polynomial ring.

\begin{mythm}\label{mainresult}
Let $R$ be a polynomial identity ring and let $D$ be a derivation of $R$.  Then $S:=J(R[x;D])\cap R$ is a nil $D$-ideal of $R$ and $J(R[x;D])=S[x;D]$. 
\end{mythm}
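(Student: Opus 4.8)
The plan is to dispose of the structural assertions immediately and isolate nilness of $S$ as the real content. Since Ferrero, Kishimoto, and Motose already supply $J(R[x;D])=S[x;D]$, the displayed equality is free, and $S=J(R[x;D])\cap R$ is visibly a two-sided ideal of $R$. To see that $S$ is a $D$-ideal I would use the defining relation $xa-ax=D(a)$: for $s\in S$ one has $D(s)=xs-sx\in J(R[x;D])$ because $J(R[x;D])$ is an ideal, while also $D(s)\in R$, so $D(s)\in J(R[x;D])\cap R=S$. Everything therefore reduces to proving that $S$ is nil.

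My first move toward nilness is to reduce to the semiprime case. Let $N$ be the prime radical of $R$. I would check that $N$ is a $D$-ideal; one convenient route is to note that the prime radical of $R[x;D]$ is $x$-homogeneous, of the form $I[x;D]$, and that closure under left multiplication by $x$ forces its contraction $I$ to be $D$-invariant, with $I=N$. Then $\overline R=R/N$ is a semiprime PI ring carrying the induced derivation $\overline D$, and the canonical surjection $R[x;D]\twoheadrightarrow\overline R[x;\overline D]$ carries $S$ into $\overline S:=J(\overline R[x;\overline D])\cap\overline R$. Because $N$ is nil, nilpotence of the elements of $\overline S$ lifts to nilpotence of the elements of $S$; so it suffices to prove the theorem for semiprime $R$.

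Assume now $R$ is semiprime PI and write $Z=Z(R)$. Every derivation maps the center into itself, so $\overline D$ restricts to a derivation $D_Z$ of the commutative ring $Z$, and $Z$ is reduced because $R$ is semiprime. The idea is to push the problem onto $Z$. I will show, unconditionally, that every central $z\in S$ is nilpotent; then $S\cap Z$ is a nil ideal of the reduced ring $Z$, hence $S\cap Z=0$, and Rowen's theorem (a nonzero ideal of a semiprime PI ring meets the center nontrivially) forces $S=0$, in particular nil. To prove that a central $z\in S$ is nilpotent I would invoke the commutative Ferrero–Kishimoto–Motose theorem applied to $(Z,D_Z)$, which gives that $J(Z[x;D_Z])\cap Z$ is nil; it is then enough to show $z\in J(Z[x;D_Z])$.

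This last descent is where I expect the real difficulty to lie. Using $J(R[x;D])=S[x;D]$ together with uniqueness of coefficients one identifies $J(R[x;D])\cap Z[x;D_Z]=(S\cap Z)[x;D_Z]$, an ideal of the subring $Z[x;D_Z]$ all of whose elements are quasiregular in $R[x;D]$; the task is to descend this quasiregularity to $Z[x;D_Z]$ itself. Quasiregularity does not contract to arbitrary subrings, so this is precisely the step that must consume the polynomial identity. I would feed in Posner's theorem and the trace-ring machinery, which make $R$ integral over a finite extension of its center and thereby set up a lying-over/going-up transporting radical membership between $R[x;D]$ and $Z[x;D_Z]$. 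Once this transfer is secured, the commutative result closes the central case, hence the semiprime case, and lifting through $N$ yields that $S$ is a nil $D$-ideal; combined with $J(R[x;D])=S[x;D]$ this is the theorem.
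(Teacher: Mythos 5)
Your handling of the $D$-ideal property and of $J(R[x;D])=S[x;D]$ matches the paper, and your instinct to force a central element into play so that the commutative Ferrero--Kishimoto--Motose argument can be imitated is also the paper's. But the proposal has two genuine gaps. First, the reduction to the semiprime case is broken: it rests on the prime radical $N$ being a $D$-ideal, which is false in general. For instance, $R=\mathbb{F}_2[t]/(t^2)$ with $D(t)=1$ is a commutative (hence PI) ring whose prime radical $(t)$ is not $D$-stable, so the induced derivation $\overline D$ on $R/N$ does not even exist. Your homogeneity argument only shows that the contraction $I$ of the prime radical of $R[x;D]$ is some $D$-stable ideal contained in $N$, not that $I=N$; in the example above necessarily $I\subsetneq N$. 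The paper explicitly flags this obstruction (it is the reason the theorem cannot be deduced from the Tsai--Wu--Chuang semiprimitivity result by passing to a quotient), and it circumvents it by never forming a differential polynomial ring over $R/N$: it chooses $a\in S$ central modulo the upper nilradical $N=N(R)$ and carries out every computation with the quasi-inverse of $xa$ inside $R[x;D]$ itself, only reducing the resulting coefficient identities modulo $N$ at the end.

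Second, the step you yourself identify as the crux --- descending membership in $J(R[x;D])$ to membership in $J(Z[x;D_Z])$ for a central $z\in S$ --- is not carried out. ``Posner's theorem and trace-ring machinery'' plus ``lying-over/going-up'' is a label rather than an argument: quasiregularity does not contract to subrings, trace rings require primeness or affineness hypotheses you have not secured, and no lying-over statement relating $J(R[x;D])$ to $J(Z[x;D_Z])$ is established. The paper's substitute is elementary and explicit: writing $f(x)=\sum_{i=0}^n x^i b_i$ for the quasi-inverse of $xa$ and exploiting both quasiregularity equations (plus the identity $xaf(x)=f(x)xa$), it proves by induction that $b_{n-j+1}a^{j}\in N$ for $j=1,\dots,n$, deduces $a^{n+1}\in N$, and concludes that $a$ generates a nil ideal and hence lies in $N$, contradicting the choice of $a$. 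To rescue your outline you would need to replace both the $D$-stability claim and the radical-descent claim with actual proofs; as written, neither holds.
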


As a direct consequence of our main result, we also get a result of Tsai, Wu, and Chuang \cite{Tsai}, which says that if $R$ is a PI ring with zero upper nilradical then $R[x;D]$ is semiprimitive. Namely, if we additionally make the assumption that $N(R)=(0)$ then we get that $S=(0)$ and so $J(R[x;D])=(0)$.  We note that in general one cannot deduce the work of Tsai, Wu, and Chuang \cite{Tsai} from our result because the nilradical of a ring is not in general closed under the action of a derivation. (For an example, see \cite{Bell}.) Thus one cannot hope to obtain our result as a consequence by passing to a semiprimitive homomorphic image.  

We note that other work on this topic has been done by other authors \cites{Jordan, Bergen, Bergen2, Isfahani}. In fact, a similar result to Theorem \ref{mainresult} can be found in \cite{Bergen2} in a more general context.

\begin{mythm}[\cite{Bergen2}, Corollary 3.5.]\label{BergenResult}
Let $L$ be a Lie algebra over a field $K$ which acts as $K$-derivations on a $K$-algebra $R$. This action determines a crossed product $R\ast U(L)$ where $U(L)$ is the enveloping algebra of $L$. Assume that $L\neq 0$ and that $R$ is either right Noetherian, a PI algebra, or a ring with no nilpotent elements. Then $J(R\ast U(L))=N\ast U(L)$ where $N$ is the largest $L$-invariant nil ideal of $R$. Furthermore $J(R\ast U(L))$ is nil in this case.\end{mythm}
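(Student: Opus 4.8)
The strategy is to isolate the ideal $S := J(R \ast U(L)) \cap R$ of $R$, to show that the entire radical is recovered from it as $S \ast U(L)$, and then to identify $S$ with the largest $L$-invariant nil ideal $N$. The guiding principle is that, since $L \neq 0$ supplies at least one nonzero derivation of $R$, the radical of $R \ast U(L)$ cannot ``escape into higher degree'': it is controlled by its contraction to $R$, and that contraction is forced to be nil by the polynomial identity, Noetherian, or reducedness hypothesis. I will use that $R$ is a $K$-algebra (so $R \ast U(L)$ is unital and contains the degree-one images of $L$), and I will lean on the single-variable picture already recorded in Theorem \ref{mainresult}.

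First I would record two facts that need only $L \neq 0$. Fact (A): $S$ is an $L$-invariant ideal of $R$. Indeed, each $d \in L$ has image $x_d \in R \ast U(L)$, and the inner derivation $\alpha \mapsto x_d\alpha - \alpha x_d$ preserves the ideal $J(R \ast U(L))$ and restricts on $R$ to the action of $d$ (the crossed-product relation gives $x_d r - r x_d = D_d(r)$). Hence for $s \in S$ one has $D_d(s) = x_d s - s x_d \in J(R \ast U(L)) \cap R = S$. Fact (B): $J(R \ast U(L)) = S \ast U(L)$, which is the $U(L)$-analogue of the Amitsur and Ferrero--Kishimoto--Motose theorem $J(R[x;D]) = (J(R[x;D])\cap R)[x;D]$. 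Here I would filter $R \ast U(L)$ by total degree in $U(L)$ and run the classical leading-coefficient argument along a PBW basis: this shows that every coefficient of a radical element again lies in $S$, while conversely $S \ast U(L)$ is quasi-regular. The degree reduction in the leading-coefficient step is precisely where a nonzero element of $L$ is used.

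The heart of the proof, and the step I expect to be the main obstacle, is the claim that $S$ is nil; this is the only place the three alternative hypotheses enter, and it splits into cases. When $R$ is reduced there are no nonzero nil ideals, so the content is to show $S = 0$, which comes from a direct semiprimeness-and-quasi-regularity argument for the crossed product over a reduced coefficient ring. When $R$ is Noetherian one passes to $\bar R = R/N$, which is again Noetherian and has no nonzero $L$-invariant nil ideal, extends the $L$-action to its semisimple Artinian Goldie quotient ring $Q$, and invokes semiprimitivity of the enveloping-algebra crossed product over $Q$; this forces the image of $S$ in $\bar R$ to vanish, so $S \subseteq N$ is nilpotent, hence nil. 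When $R$ is PI one again works modulo $N$ and reduces to the commutative Ferrero--Kishimoto--Motose result (equivalently to Theorem \ref{mainresult}): a semiprime PI ring has a central Goldie quotient that is Azumaya over its center, the $L$-action descends to that center, and the commutative case applied there shows the contracted radical is nil. In every case $S$ emerges as an $L$-invariant nil ideal of $R$.

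It remains to identify $S$ with $N$ and to deduce nilness of the full radical. Since $S$ is $L$-invariant and nil, $S \subseteq N$ by maximality of $N$. For the reverse inclusion I would verify that $N \ast U(L)$ is quasi-regular, indeed nil: $L$-invariance of $N$ gives $x_d N \subseteq N x_d + N$ for every $d$, whence $(N \ast U(L))^k \subseteq N^k \ast U(L)$, so when $N$ is nilpotent (the Noetherian case, and the reduced case where $N = 0$) the ideal $N \ast U(L)$ is nilpotent, while in the PI case the same estimate together with local nilpotence of the nil PI ideal $N$ and a filtered leading-coefficient argument shows each element of $N \ast U(L)$ is nilpotent. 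Consequently $N \ast U(L) \subseteq J(R \ast U(L)) = S \ast U(L)$, giving $N \subseteq S$. Therefore $S = N$, so $J(R \ast U(L)) = N \ast U(L)$, and this radical is nil, as claimed. The subtlest points to nail down are the semiprimitivity inputs in the Noetherian and PI cases (step (C)) and the nilness of $N \ast U(L)$ in the PI case, both of which require genuine structure theory rather than formal manipulation.
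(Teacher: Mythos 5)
The statement you were asked to prove is Theorem \ref{BergenResult}, which this paper merely \emph{quotes} from \cite{Bergen2} (Corollary 3.5) for purposes of comparison with its own main theorem; the paper contains no proof of it, so there is no internal argument to measure your attempt against --- the relevant benchmark is Bergen, Montgomery, and Passman's original proof. Judged on its own terms, your proposal is an outline rather than a proof. You correctly reconstruct the architecture: Fact (A), that $S := J(R\ast U(L))\cap R$ is $L$-invariant, is sound (your inner-derivation argument via $x_d r - r x_d = D_d(r)$ is exactly the device the paper uses in the first paragraph of its proof of Theorem \ref{mainresult}); the decomposition $J = S\ast U(L)$, the case split to show $S$ nil, and the identification $S = N$ are the right skeleton. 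But you defer precisely the steps carrying the mathematical content, and you say so yourself: ``invokes semiprimitivity of the enveloping-algebra crossed product over $Q$'' in the Noetherian case and the ``direct semiprimeness-and-quasi-regularity argument'' in the reduced case are not arguments but restatements of the theorem in special cases. Even Fact (B) is not the ``classical leading-coefficient argument'': a crossed product $R\ast U(L)$ carries a $2$-cocycle twisting ($x_d x_e - x_e x_d = x_{[d,e]} + t(d,e)$ with $t(d,e)\in R$), so the PBW degree-reduction step needs genuine care and is a substantive result in \cite{Bergen2}, not a formal transcription of the one-variable Amitsur/Ferrero--Kishimoto--Motose argument.

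Two concrete defects beyond deferral. First, in the PI case you assert that a semiprime PI ring has a central Goldie quotient that is ``Azumaya over its center.'' That is false at this generality: Posner--Rowen give a central simple quotient ring only for \emph{prime} PI rings, a semiprime PI ring need not be Goldie, and Azumaya requires more (Artin--Procesi). The delicate point is exactly how to exploit the center of a semiprime PI ring, and the route this paper takes for the analogous single-derivation statement is not quotient rings at all but Rowen's theorem (Theorem \ref{centrethm}) that every nonzero ideal of a PI ring with zero upper nilradical meets the center --- an ingredient your sketch never touches. Second, your final step fails as stated: nilness of $N\ast U(L)$ in the PI case does not follow from ``local nilpotence of the nil PI ideal $N$ and a filtered leading-coefficient argument,'' because the coefficients of powers of an element of $N\ast U(L)$ lie in the subring generated by finitely many elements of $N$ \emph{together with all their iterated images under the derivations from $L$}, which is not a finitely generated subring of $N$; local nilpotence alone bounds nothing. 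Handling exactly this difficulty in the single-derivation case is the substance of \cite{Bell}, itself a nontrivial theorem. Your estimate $(N\ast U(L))^k \subseteq N^k\ast U(L)$, while correct given $L$-invariance of $N$, settles only the cases where $N$ is nilpotent. So the proposal identifies a plausible skeleton of the Bergen--Montgomery--Passman proof but cannot be credited as a proof of the quoted theorem.
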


While the above result is in a way more general than our result, we do not require that $R$ be a PI algebra over a field. Our result concerns PI rings, of which PI algebras are a special case. 

The outline of this paper is as follows.  In \S2, we give some background on polynomial identity rings that we will use in our proof.  Then in \S3 we give the proof of Theorem \ref{mainresult}.
\section{Preliminaries}

To prove our main result, we make use of some theory of polynomial identity rings, or PI rings for short. For in-depth introductions to this theory we refer the reader to \cite{RowenPI} and \cite{Drensky}. Let $\Z\langle x_1,\dots, x_d\rangle$ denote the free associative algebra over $\Z$ in $d$ variables. We say that $f(x_1,\dots, x_d)$ is a \emph{polynomial identity} (PI) for $R$ if $f(r_1,\dots, r_d)=0$ for all $r_1,\dots, r_d\in R$. We say that $R$ is a \emph{PI ring} if there exists a nonzero $f\in \Z\langle x_1,\dots, x_d\rangle$, for some $d\in \N$, such that $f$ is a polynomial identity for $R$ and at least one coefficient of $f$ is $1$. If $R$ is a PI ring it can be shown that there exists a polynomial identity for $R$ of the form
$$
x_1x_2\cdots x_d=\sum_{\sigma\in S_d\bs\lbrace {\rm id}\rbrace} c_\sigma x_{\sigma(1)}x_{\sigma(2)}\cdots x_{\sigma(d)},
$$
where each $c_\sigma\in\Z$. Examples of PI rings include commutative rings, finite-dimensional algebras over a field, and the quantum plane $\mathbb{C}_q\{x,y\}$ with relation $xy=qyx$ and $q$ a root of unity. We make use of several fundamental results from ring theory and polynomial identity theory, which can be found in \cite{RowenPI}. In particular, if $R$ is a ring then $R$ has a a unique maximal nil ideal, denoted by $N(R)$ (\cite{Rowen PI}, Proposition 1.6.9.). Furthermore, if we assume that $R$ is a PI ring then $N(R)=\sum\lbrace \text{nil left ideals of }R\rbrace$ (\cite{RowenPI}, Corollary 1.6.18.).

The nil ideal $N(R)$ is called the \emph{upper nilradical} of $R$, or sometimes just the nilradical of $R$. We also get the following result about the centre of a polynomial identity ring, due to Rowen following on work by Posner, which will later allow us to use techniques found in \cite{Ferrero} for commutative rings. 

\begin{mythm}[\cite{RowenPI}, Theorem 1.6.14.]\label{centrethm} Let $R$ be a PI ring such that $N(R)=(0)$. Then every nonzero ideal of $R$ intersects the centre of $R$ nontrivially.
\end{mythm}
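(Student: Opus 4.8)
The plan is to localize the entire problem to the ideal $I$ and to apply the theory of central polynomials there, rather than to $R$ directly. First, observe that $N(R)=(0)$ forces $R$ to be semiprime: a nonzero nilpotent ideal would be a nonzero nil ideal, contradicting $N(R)=(0)$. Fix a nonzero ideal $I$ of $R$. As an ideal of a semiprime ring, $I$ is again semiprime, and as a subring of a PI ring, $I$ again satisfies a polynomial identity. So it will suffice to produce a nonzero element of $Z(I)$, the centre of the ring $I$, once we know that such an element automatically lies in $Z(R)$.

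That last point is the elementary crux, and I would isolate it as a lemma: \emph{if $R$ is semiprime and $z\in I$ commutes with every element of $I$, then $z\in Z(R)$.} To prove it, take any $r\in R$ and set $c:=zr-rz$. Since $I$ is a two-sided ideal and $z\in I$, we have $c\in I$. For $a\in I$, using that $ra,ar\in I$ and that $z$ commutes with $I$, a one-line computation gives $ca=z(ra)-r(za)=(ra)z-r(az)=0$, and likewise $ac=0$; thus $cI=Ic=(0)$. As $c\in I$, for any $r'\in R$ the element $r'c$ lies in $I$, so $cr'c\in cI=(0)$, i.e. $cRc=(0)$. Semiprimeness then yields $c=0$, so $z$ commutes with every $r\in R$. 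Consequently any nonzero central element of $I$ is a nonzero element of $I\cap Z(R)$, which is exactly what the theorem asks for.

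It remains to produce a nonzero element of $Z(I)$, and here I would invoke the structure theory of PI rings. Since $I$ is semiprime, it is a subdirect product of prime rings $I/P_\alpha$, each of which is prime PI; by Posner's theorem each $I/P_\alpha$ satisfies precisely the identities of $M_{d_\alpha}$ for some $d_\alpha$, and since $I$ satisfies a standard identity these $d_\alpha$ are bounded, with maximum $n$ attained on some factor. Let $g=g_n(x_1,\dots,x_m)$ be a Formanek--Razmyslov central polynomial for $M_n$: a multilinear polynomial with zero constant term that is not an identity of $M_n$ but for which $[\,g,y\,]$ is an identity of $M_n$. Because $M_d$ embeds as a subring of $M_n$ for $d\le n$, the identity $[\,g,y\,]$ is inherited by every $M_{d_\alpha}$, hence by each factor $I/P_\alpha$, hence by $I$; this gives $g(I,\dots,I)\subseteq Z(I)$. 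On the factor of maximal degree $n$, however, $g$ is not an identity, so $g$ does not vanish identically on $I$. Thus there exist $a_1,\dots,a_m\in I$ with $0\neq g(a_1,\dots,a_m)\in Z(I)$, and by the lemma this is a nonzero element of $I\cap Z(R)$.

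The main obstacle is the last paragraph: it rests on the genuinely deep inputs of PI theory --- the existence of central polynomials (Formanek--Razmyslov) together with Posner's theorem describing prime PI rings and their identities --- all of which are available in the references cited in this section. The conceptual point of the argument is instead the decision to run the central-polynomial construction inside $I$, where the commutator lemma guarantees that centrality in $I$ upgrades to centrality in $R$. This neatly sidesteps the difficulty that a single central polynomial chosen for $R$ may vanish on $I$ when $R$ decomposes and the nontrivial behaviour is confined to a complementary ideal.
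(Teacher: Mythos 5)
The paper contains no proof of this statement: it is imported verbatim from Rowen's book (Theorem 1.6.14) and used as a black box in the proof of the main theorem, so there is no internal argument to compare against. Your proposal is correct and is, in essence, the classical Rowen argument: restrict attention to the ideal $I$ (semiprime PI as a ring in its own right), produce a nonzero value of a central polynomial inside $I$, and upgrade centrality in $I$ to centrality in $R$ via the commutator lemma --- your computation $cI=Ic=(0)$, hence $cRc\subseteq cI=(0)$ and $c=0$ by semiprimeness, is exactly the standard device, and it is sound (note $c^2\in cI=(0)$ as well, so the ideal generated by $c$ is genuinely nilpotent even without a unit). The one phrasing that deserves care is the claim that each prime factor satisfies \emph{precisely} the identities of $M_{d_\alpha}$: identities of matrix rings over fields depend on the characteristic, so what Posner--Kaplansky plus central localization actually deliver is (i) each factor embeds, after localization and scalar extension, in $M_{d_\alpha}(\overline{K_\alpha})$, hence satisfies every constant-term-free identity of it, in particular $[g,y]$ via the corner embedding $M_{d_\alpha}\subseteq M_n$; and (ii) multilinear identities pass to central localizations and scalar extensions, so the multilinear $g$ cannot vanish identically on a factor of maximal PI-degree $n$. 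Since these two consequences are exactly what your argument uses, nothing breaks, and your writeup would serve as a faithful reconstruction of the cited proof.
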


\section{Main Result}

In this section, we prove Theorem \ref{mainresult}---we note that we make use of some ideas from \cite{Ferrero} in this proof. 

\begin{proof}[Proof of Theorem \ref{mainresult}]
It is clear that $S$ is an ideal of $R$. Now let $a\in S$. Then $xa,ax\in J(R[x;D])$ and $xa=ax+D(a)$. Therefore $D(a)\in J(R[x;D])$ and so $D(a)\in S$. This shows that $S$ is a $D$-ideal.

It is left to show that $S$ is nil. To do this we consider the ring $L:=(S+N)/N$, where $N=N(R)$. We begin by noting that $L$ is an ideal of $R/N$, which is a polynomial identity ring with zero upper nilradical.   

If $L$ is the zero ideal of $R/N$ we have that $S\subseteq N$ and so $S$ is nil. Suppose, towards a contradiction, that $L$ is not the zero ideal. Thus by Theorem \ref{centrethm} we have that $L$ intersects the centre of $R/N$ nontrivially. Let $a\in S$ such that $a+N\neq N$ and $a+N$ is in the centre of $R/N$. We have that $xa\in J(R[x;D])$ and so by quasi-regularity there exists $f(x)\in J(R[x;D])$ such that 
\begin{align}
f(x)+xa-f(x)xa=0,
\end{align}
and
\begin{align}
f(x)+xa-xaf(x)=0.
\end{align}
Write
$$
f(x)=\sum_{i=0}^n x^ib_i.
$$
From (2) we immediately get that $b_0=0$. Now, from (1) we see that 
\begin{align*}
\sum_{i=0}^n x^ib_i+xa&-\sum_{i=0}^n \left(x^ib_ixa\right)=\sum_{i=0}^n x^ib_i+xa-\sum_{i=0}^n \left(x^i(xb_i-D(b_i))a\right)\\
&=\sum_{i=0}^n x^ib_i+xa-\sum_{i=0}^n x^{i+1}b_ia+\sum_{i=1}^n x^iD(b_i)a=0.
\end{align*}
Upon equating coefficients we get that:
\begin{align}
&b_na=0;\\
&b_i- b_{i-1}a+D(b_i)a=0, \text{ for } i=2,\ldots , n;\\
&b_1+a+D(b_1)a=0.
\end{align}
We have $J(R[x;D])=S[x;D]$ (see \cite{Ferrero})  and so $f(x)\in S[x;D]$. Therefore each $b_i\in S$.

We claim that $b_{n-j+1}a^{j}\in N$ for $j=1,\ldots ,n$. We prove this by induction on $j$. If $j=1$ we clearly have that $0=b_na\in N$. Now assume the result is true for $j<k$ with $k\in \{2,\ldots , n\}$ and consider the case when $j=k$. In particular, we have that $b_{n-k+2}a^{k-1}\in N$. Now, post-multiplying (4), taking $i={n-k+2}$, by $a^{k-1}$ we have by the induction hypothesis that
\begin{equation}
\begin{split}
b_{n-k+2}a^{k-1}&-b_{n-k+1}a^k+D(b_{n-k+2})a^k\\ &\equiv -b_{n-k+1}a^k+D(b_{n-k+2})a^k ~(\bmod ~N)\\
&\equiv 0 ~(\bmod ~N).
\end{split}
\end{equation}
To prove the claim it is left to show that $D(b_{n-k+2})a^k\in N$. From (1) and (2) we also get that $xaf(x)=f(x)xa$. Recall that we have the identity
$$
ax^r=\sum_{\ell=0}^r (-1)^\ell{r \choose \ell} x^{r-\ell}D^{\ell}(a),
$$
for any $r\in\N$. But then we have that
\begin{equation}\begin{split}
0&=f(x)xa-xaf(x)=\sum_{i=0}^n x^{i+1}b_ia+\sum_{i=1}^n x^iD(b_i)a-\sum_{i=0}^n xax^ib_i\\
&=\sum_{i=0}^n x^{i+1}b_ia+\sum_{i=1}^n x^iD(b_i)a-\sum_{i=0}^n\sum_{\ell=0}^i(-1)^\ell {i\choose\ell}x^{i-\ell+1}D^\ell(a)b_i.
\end{split}\end{equation}
Equating coefficients of $x^{n-k+2}$ in (7) we have that
\begin{equation}\begin{split}
b_{n-k+1}a&+D(b_{n-k+2})a+(-1)^k {n\choose k-1}D^{k-1}(a)b_n\\&+(-1)^{k-1}{n-1\choose k-2}D^{k-2}(a)b_{n-1}\\&+ \dots+ {n-k+2\choose 1}D(a)b_{n-k+2}-ab_{n-k+1}=0.
\end{split}\end{equation}
Notice that since $a$ is central mod $N$, the first and last term of the left-hand-side of $(8)$ are identical mod $N$. Also, by the induction hypothesis $b_na^{k-1}, b_{n-1}a^{k-1},\dots,\\ b_{n-k+2}a^{k-1}\in N$. Therefore by post-multiplying (8) by $a^{k-1}$ and considering this equation modulo $N$ we have
$$
D(b_{n-k+2})a^k\in N.
$$
Using this and (6) we get that $b_{n-k+1}a^k\in N$. The claim is then true by induction. 

By the above claim with $j=n$ we have that $b_1a^n\in N$ and so by (5)
\begin{align}
0\equiv b_1a^{n}+a^{n+1}+D(b_1)a^{n+1}\equiv a^{n+1}+D(b_1)a^{n+1} ~(\bmod ~N).
\end{align}
To see that $D(b_1)a^{n+1}\in N$ we equate coefficients of $x$ in (7). From doing so we get that
$$
D(b_1)a+(-1)^{n+1}D^n(a)b_n+(-1)^{n}D^{n-1}(a)b_{n-1}+\dots+D(a)b_1=0.
$$
Post-multiplying this by $a^{n+1}$ and using our claim we observe that $D(b_1)a^{n+1}\in N$. Combining this with (9) we get that $a^{n+1}\in N$ and so $a$ is nilpotent. But then, using the fact that $a$ is central mod $N$, we see that $(a\Z+aR+N)/N$ is a nil ideal of $R/N$.  Since $N$ is a nil ideal of $R$, we see that $a\Z+aR+N$ is also a nil ideal of $R$.  But $N$ is the upper nilradical and so we see that $a\in N$ and so we have that $a+N=N$, which is a contradiction. We conclude that $L=(0)$ and so $S$ is nil.
\end{proof}

We get the following immediate corollary, which is part of the main result of \cite{Tsai}.

\begin{crol}
Let $R$ be a PI ring with $N(R)=(0)$. Then $R[x;D]$ is semiprimitive for any derivation $D$ of $R$. 
\end{crol}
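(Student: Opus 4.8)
The plan is to deduce this directly from Theorem \ref{mainresult}, so the argument should be very short. First I would apply the main theorem to the given PI ring $R$ and derivation $D$, which tells us that $S := J(R[x;D]) \cap R$ is a nil $D$-ideal of $R$ and that $J(R[x;D]) = S[x;D]$. The only substantive point is to observe what the hypothesis $N(R) = (0)$ forces upon $S$.

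The key step is to recall that $N(R)$ is the \emph{upper nilradical}, i.e.\ the unique maximal nil ideal of $R$ (by \cite{RowenPI}, Proposition 1.6.9, and for PI rings the sum of all nil left ideals by Corollary 1.6.18). Since $S$ is in particular a nil ideal of $R$, it must be contained in $N(R)$. Under the assumption $N(R) = (0)$ this immediately gives $S = (0)$. Feeding this back into the second conclusion of Theorem \ref{mainresult} yields
\begin{align}
J(R[x;D]) = S[x;D] = (0)[x;D] = (0),
\end{align}
which is precisely the assertion that $R[x;D]$ is semiprimitive.

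I do not expect any genuine obstacle here: the corollary is a straightforward specialization of the main theorem, and the $D$-ideal structure of $S$ plays no role once the theorem has been invoked---only the fact that $S$ is nil is needed. The one thing to state carefully is the elementary implication that a nil ideal in a ring whose upper nilradical vanishes must itself be zero, and this is immediate from the maximality of $N(R)$ among nil ideals.
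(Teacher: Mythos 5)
Your proof is correct and is exactly the argument the paper intends: invoke Theorem \ref{mainresult}, note that the nil ideal $S$ lies in $N(R)=(0)$, and conclude $J(R[x;D])=S[x;D]=(0)$. The paper leaves this as an ``immediate corollary'' and sketches the same reasoning in the introduction, so there is nothing to add.
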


\section{Acknowledgements}
The author thanks Jason Bell and the referee for many helpful comments and suggestions.
\begin{bibdiv}
\begin{biblist}

\bib{Amitsur}{article}{
   author={Amitsur, S. A.},
   title={Radicals of polynomial rings},
   journal={Canad. J. Math.},
   volume={8},
   date={1956},
   pages={355--361},
}

\bib{Bell}{article}{
  author={Bell, Jason},
  author={Madill, Blake},
  author={Shinko, Forte},
  title={Differential polynomial rings over rings satisfying a polynomial identity},
  journal = {ArXiv}
  eprint = {http://arxiv.org/abs/1403.2230v2},
}

\bib{Bergen}{article}{
   author={Bergen, Jeffrey},
   author={Grzeszczuk, Piotr},
   title={Jacobson radicals of ring extensions},
   journal={J. Pure Appl. Algebra},
   volume={216},
   date={2012},
   number={12},
   pages={2601--2607},
}

\bib{Bergen2}{article}{
   author={Bergen, Jeffrey},
   author={Montgomery, S.},
   author={Passman, D. S.},
   title={Radicals of crossed products of enveloping algebras},
   journal={Israel J. Math.},
   volume={59},
   date={1987},
   number={2},
   pages={167--184},
}

\bib{Drensky}{book}{
   author={Drensky, Vesselin},
   author={Formanek, Edward},
   title={Polynomial identity rings},
   series={Advanced Courses in Mathematics. CRM Barcelona},
   publisher={Birkh\"auser Verlag, Basel},
   date={2004},
   pages={viii+200},
}

\bib{Ferrero}{article}{
   author={Ferrero, Miguel},
   author={Kishimoto, Kazuo},
   author={Motose, Kaoru},
   title={On radicals of skew polynomial rings of derivation type},
   journal={J. London Math. Soc. (2)},
   volume={28},
   date={1983},
   number={1},
   pages={8--16},
}

\bib{Jordan}{article}{
   author={Jordan, D. A.},
   title={Noetherian Ore extensions and Jacobson rings},
   journal={J. London Math. Soc. (2)},
   volume={10},
   date={1975},
   pages={281--291},
}

\bib{Isfahani}{article}{
   author={Nasr-Isfahani, Alireza},
   title={Jacobson Radicals of Skew Polynomial Rings of Derivation Type},
   journal={Canad. Math. Bull.},
   volume={57},
   date={2014},
   number={3},
   pages={609--613},
}

\bib{RowenPI}{book}{
   author={Rowen, Louis Halle},
   title={Polynomial identities in ring theory},
   series={Pure and Applied Mathematics},
   volume={84},
   publisher={Academic Press, Inc., Harcourt Brace Jovanovich, Publishers,
   New York-London},
   date={1980},
   pages={xx+365},
}

\bib{Tsai}{article}{
   author={Tsai, Yuan-Tsung},
   author={Wu, Tsu-Yang},
   author={Chuang, Chen-Lian},
   title={Jacobson radicals of Ore extensions of derivation type},
   journal={Comm. Algebra},
   volume={35},
   date={2007},
   number={3},
   pages={975--982},
}

\end{biblist}
\end{bibdiv}

\end{document}